\newtheorem{theorem}{Theorem}[section]
\newtheorem{proposition}[theorem]{Proposition}
\theoremstyle{definition}
\theoremstyle{remark}
\numberwithin{equation}{section}
\newcommand{\cone}{\mbox{$\times \hspace*{-0.258cm} \times$}}
\begin{document}

\title[Some minimal submanifolds generalizing the Clifford torus ]
{Some minimal submanifolds generalizing the Clifford torus }
\author[J. CHOE and J. Hoppe ]{JAIGYOUNG CHOE and JENS HOPPE}
\thanks{J.C. supported in part by NRF 2011-0030044, SRC-GAIA}

\address{Korea Institute for Advanced Study, Seoul, 02455, Korea}
\email{choe@kias.re.kr}

\address{KTH, Stockholm, Sweden}
\email{hoppe@kth.se}

\begin{abstract}
The Clifford torus is a product surface in $\mathbb S^3$ and it is {\it helicoidal}. It will be shown that more minimal submanifolds of $\mathbb S^n$ have these properties.
\end{abstract}

\maketitle

The Clifford torus is the simplest minimal surface in $\mathbb S^3$ besides the great sphere. Similarly in higher dimension we have a generalized Clifford torus $\mathbb S^p\left(\sqrt{\frac{p}{p+q}}\right)\times\mathbb S^q\left(\sqrt{\frac{q}{p+q}}\right)$ which is minimal in $\mathbb S^{p+q+1}$.

In Euclidean space there is an easy theorem that $\Sigma_1\times\Sigma_2$ is minimal in $\mathbb R^{n_1+n_2}$ if $\Sigma_1$ and $\Sigma_2$ are minimal in $\mathbb R^{n_1}$ and $\mathbb R^{n_2}$, respectively. While one cannot expect the same theorem to hold literally in $\mathbb S^n$, we will prove an analogous theorem as follows:
\begin{quote}
\it If $\Sigma_1^m$ is minimal in $\mathbb S^p $ and $\Sigma_2^n$ is minimal in $\mathbb S^q$, then \\$\sqrt{\frac{m}{m+n}}\,\Sigma_1^m\times\sqrt{\frac{n}{m+n}}\,\Sigma_2^n$ is minimal in $\mathbb S^{p+q+1}$.
\end{quote}

There is another way of proving the minimality of the Clifford torus $\Sigma$ in $\mathbb S^{3}$. It is well known that $\Sigma$ is (doubly) foliated by great circles and $\Sigma$  divides $\mathbb S^3$ into two congruent domains $D_1,D_2$. For every great circle $\ell$ in $\Sigma$ consider the rotation $\rho_\ell$ of $\mathbb S^3$ about $\ell$ by $180^\circ$. One can show that
\begin{equation}\label{helicoidal}
\rho_\ell(\Sigma)=\Sigma,\,\,\,\,\rho_\ell(D_1)=D_2,\,\,\,\, \rho_\ell(D_2)=D_1,\,\,\,\,\rho_\ell(p)=p,\,\,\forall\, p\in\ell.
\end{equation}
More generally, if a hypersurface $\Sigma^{n-1}$ of a Riemannian manifold $M^n$ has an isometry $\rho$ (in place of $\rho_\ell$) satisfying \eqref{helicoidal} at every point $p$ of $\Sigma^{n-1}$, $\Sigma^{n-1}$ is said to be {\it helicoidal} in $M$. In Proposition \ref{clifford} we show that the generalized Clifford torus $\mathbb S^p\left(1/{\sqrt{2}}\right)\times\mathbb S^p\left(1/{\sqrt{2}}\right)$ is helicoidal in $\mathbb S^{2p+1}$. In theorem \ref{helicoidalthm} we will prove that every helicoidal hypersurface of $M$ is minimal.

Recently Tkachev \cite{T} and Hoppe-Linardopoulos-Turgut \cite{HLT} found algebraic minimal hypersurfaces $N_1$ in $\mathbb S^{n^2-1}$ and $N_2$ in $\mathbb S^{2n^2-n-1}$, respectively:\\
$ N_1=\{(x_{11},x_{12},\ldots,x_{nn})\in\mathbb R^{n^2}:(x_{ij})\,\, {\rm is\,\, an \,\,{\it n\times n} \,\,matrix\,\,with\,\, zero\,\, determinant.}\}$\\
$\cap\,\,\mathbb S^{n^2-1};$\\
$N_2=\{(x_{11},x_{12},\ldots,x_{2n2n})\in\mathbb R^{4n^2}:(x_{ij})$ is a $2n\times 2n$ skew-symmetric matrix with  zero determinant.$\} \cap\,\,\mathbb S^{4n^2-1}$ is similar to a minimal hypersurface in $\mathbb S^{2n^2-n-1}.$
In this paper we give a new proof of their minimality, showing that they are helicoidal.

\section{product manifolds}
Let $\Sigma^m$ be an $m$-dimensional submanifold of $\mathbb S^p$ and $\Sigma^n$ an $n$-dimensional submanifold of $\mathbb S^q$. Denote by $a\Sigma^m$ and $b\Sigma^n$ the homothetic expansions of $\Sigma^m\subset\mathbb R^{p+1}$ and $\Sigma^n\subset\mathbb R^{q+1}$ with factors of $a$ and $b$, respectively.

\begin{theorem}
If $\Sigma_1^{n_1}$ is minimal in $\mathbb S^p$ and $\Sigma_2^{n_2}$ is minimal in $\mathbb S^q$, then $\sqrt{\frac{n_1}{n_1+n_2}}\Sigma_1^{n_1}\times\sqrt{\frac{n_2}{n_1+n_2}}\Sigma_2^{n_2}$ is minimal in $\mathbb S^{p+q+1}$.
\end{theorem}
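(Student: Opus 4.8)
The plan is to prove minimality through the classical characterization of Takahashi: a $k$-dimensional submanifold $M\subset\mathbb S^{N}\subset\mathbb R^{N+1}$ is minimal in $\mathbb S^N$ if and only if its position vector $X$ (the restriction to $M$ of the coordinate vector of $\mathbb R^{N+1}$) satisfies $\Delta_M X=-kX$, where $\Delta_M$ is the Laplace--Beltrami operator of the induced metric and $k=\dim M$. This reduces the whole problem to a single eigenvalue computation. Writing $a=\sqrt{\frac{n_1}{n_1+n_2}}$ and $b=\sqrt{\frac{n_2}{n_1+n_2}}$, so that $a^2+b^2=1$, the position vector of $M:=a\Sigma_1^{n_1}\times b\Sigma_2^{n_2}$ at a point $(ax,by)$ with $x\in\Sigma_1,\ y\in\Sigma_2$ is $X=(ax,by)\in\mathbb R^{p+1}\times\mathbb R^{q+1}=\mathbb R^{p+q+2}$, and I would first record that $|X|^2=a^2|x|^2+b^2|y|^2=a^2+b^2=1$, so $X$ indeed lands in $\mathbb S^{p+q+1}$ and $k=n_1+n_2$.

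Next I would use that $M$ carries a Riemannian product metric in which each factor has been homothetically rescaled: the metric on $a\Sigma_1$ is $a^2$ times that of $\Sigma_1$, and likewise for $b\Sigma_2$. Since the Laplacian of a product splits as a sum, and since a homothety by $a$ multiplies the Laplacian by $a^{-2}$, one has $\Delta_M=a^{-2}\Delta_{\Sigma_1}+b^{-2}\Delta_{\Sigma_2}$, each summand acting on the functions that depend only on the corresponding factor. The coordinate functions making up $ax$ depend on the $\Sigma_1$ variable alone and those making up $by$ on the $\Sigma_2$ variable alone, so $\Delta_M$ acts on the two blocks of $X$ separately.

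Then I would invoke the hypotheses: since $\Sigma_1$ is minimal in $\mathbb S^p$ and $\Sigma_2$ in $\mathbb S^q$, Takahashi's theorem applied in the factors gives $\Delta_{\Sigma_1}x=-n_1x$ and $\Delta_{\Sigma_2}y=-n_2y$. Hence
\begin{equation*}
\Delta_M(ax)=a^{-2}\Delta_{\Sigma_1}(ax)=-\,n_1a^{-2}(ax),\qquad \Delta_M(by)=-\,n_2b^{-2}(by).
\end{equation*}
The point of the particular choice of radii is now visible: $n_1a^{-2}=n_2b^{-2}=n_1+n_2$, so both blocks are eigenfunctions with the \emph{same} eigenvalue, giving $\Delta_M X=-(n_1+n_2)X=-kX$. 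By Takahashi this is exactly minimality of $M$ in $\mathbb S^{p+q+1}$. I do not expect a genuine obstacle here: once minimality is translated into the eigenvalue equation the argument is forced, and the only real content is the observation that $a^2=\frac{n_1}{n_1+n_2}$, $b^2=\frac{n_2}{n_1+n_2}$ are precisely the scalings that equalize the two factor eigenvalues $n_1/a^2$ and $n_2/b^2$. The mild care needed is in the bookkeeping of how the Laplacian behaves under taking products and under homothety, which I would state as a preliminary lemma if it is not already available.
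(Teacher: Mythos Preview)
Your argument is correct and follows essentially the same route as the paper: both verify Takahashi's criterion $\Delta_M X=-(n_1+n_2)X$ for the position vector, using that the minimality of the factors gives $\Delta_{\Sigma_i}\mathbf m_i=-n_i\mathbf m_i$. The only difference is packaging---the paper carries out the Laplacian computation in explicit local coordinates with block metrics, whereas you invoke the general facts that the Laplacian of a Riemannian product splits additively and that a homothety by $a$ rescales it by $a^{-2}$; the content is identical.
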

\begin{proof}
Let $\varphi_1,\ldots,\varphi_{n_1}$ be the local coordinates of $\Sigma_1^{n_1}$ such that ${\bf m_1}:(\varphi_1,\ldots,\varphi_{n_1})\in D_1\subset\mathbb R^{n_1}\rightarrow{\bf m_1}(\varphi_1,\ldots,\varphi_{n_1})\in\Sigma_1^{n_1}\subset\mathbb S^p\subset\mathbb R^{p+1}$ is a local immersion. Similarly, $\varphi_{n_1+1},\ldots,\varphi_{n_1+n_2}$ are local coordinates of $\Sigma_2^{n_2}$ with a local immersion ${\bf m_2}:D_2\subset\mathbb R^{n_2}\rightarrow\Sigma_2^{n_2}\subset\mathbb S^q\subset\mathbb R^{q+1}$. Clearly $\sqrt{\frac{n_1}{n_1+n_2}}\Sigma_1^{n_1}\times\sqrt{\frac{n_2}{n_1+n_2}}\Sigma_2^{n_2}\subset\mathbb R^{p+q+2}$ is a subamnifold of $\mathbb S^{p+q+1}$. Let's define a local immersion ${\bf \hat{m}}:D_1\times D_2\subset\mathbb R^{n_1+n_2}\rightarrow\mathbb S^{p+q+1}\subset\mathbb R^{p+q+2}$ by
$${\bf \hat{m}}(\varphi_1,\ldots,\varphi_{n_1+n_2})=\left(\begin{array}{c}\sqrt{\frac{n_1}{n_1+n_2}}{\bf m_1}(\varphi_1,\ldots,\varphi_{n_1})\\\sqrt{\frac{n_2}{n_1+n_2}}{\bf m_2}(\varphi_{n_1+1},\ldots,\varphi_{n_1+n_2})\end{array}\right).$$
Then ${\bf \hat{m}}$ is an immersion into $\sqrt{\frac{n_1}{n_1+n_2}}\Sigma_1^{n_1}\times\sqrt{\frac{n_2}{n_1+n_2}}\Sigma_2^{n_2}$. The metric of ${\bf \hat{m}}$ is
$$ds^2=\sum_{A,B=1}^{n_1+n_2}\hat{g}_{AB}d\varphi_Ad\varphi_B,$$
where $(\hat{g}_{AB})$ is the block matrix
$$(\hat{g}_{AB})=\left( \begin{array}{c}
\frac{n_1}{n_1+n_2}g_{ab}\\O
\end{array}
\begin{array}{c}
O\\\frac{n_2}{n_1+n_2}g_{a'b'}
\end{array}\right)$$
with
$$a,b=1,\ldots,n_1,\,\,\,\,\, a',b'=n_1+1,\ldots,n_1+n_2,$$
and $$g_{ab}=\frac{\partial{\bf m_1}}{\partial\varphi_a}\cdot\frac{\partial{\bf m_1}}{\partial\varphi_b},\,\,\,\,\,\,\,g_{a'b'}=\frac{\partial{\bf m_2}}{\partial\varphi_{a'}}\cdot\frac{\partial{\bf m_2}}{\partial\varphi_{b'}}.$$
Moreover,
$$(\hat{g}^{AB})=\left( \begin{array}{c}
\frac{n_1+n_2}{n_1}g^{ab}\\O
\end{array}
\begin{array}{c}
O\\\frac{n_1+n_2}{n_2}g^{a'b'}
\end{array}\right)$$
and
$$\hat{g}={\rm det}(\hat{g}_{AB})=\frac{n_1^{n_1}n_2^{n_2}}{(n_1+n_2)^{n_1+n_2}}\,gg',\,\,\,\,\,g={\rm det}(g_{ab}),\,\,\,\,\,g'={\rm det}(g_{a'b'}).$$
Let $\Delta_{n_1},\Delta_{n_2},\Delta_{n_1+n_2}$ denote the Laplacians on $\Sigma_1^{n_1},\Sigma_2^{n_2}$ and $\sqrt{\frac{n_1}{n_1+n_2}}\Sigma_1^{n_1}\times\sqrt{\frac{n_2}{n_1+n_2}}\Sigma_2^{n_2}$, respectively.
Since $\Sigma_1^{n_1},\Sigma^{n_2}_2$ are minimal, we have
$$\Delta_{n_1}{\bf m_1}=\frac{1}{\sqrt{g}}\sum_{a,b}\frac{\partial}{\partial\varphi_a}\left(\sqrt{g}g^{ab}\frac{\partial}{\partial\varphi_b}{\bf m_1}\right)=-n_1{\bf m_1},$$
$$\Delta_{n_2}{\bf m_2}=\frac{1}{\sqrt{g'}}\sum_{a',b'}\frac{\partial}{\partial\varphi_{a'}}\left(\sqrt{g'}g^{a'b'}\frac{\partial}{\partial\varphi_{b'}}{\bf m_2}\right)=-n_2{\bf m_2}.$$
Hence
\begin{eqnarray*}
\Delta_{n_1+n_2}{\bf \hat{m}}&=&\frac{1}{\sqrt{\hat{g}}}\sum_{A,B}\frac{\partial}{\partial\varphi_A}\left(\sqrt{\hat{g}}\hat{g}^{AB}\frac{\partial}{\partial\varphi_B}{\bf \hat{m}}\right)\\
&=&\frac{n_1+n_2}{n_1}\frac{1}{\sqrt{g}}\sum_{a,b}\frac{\partial}{\partial\varphi_a}\left(\begin{array}{c}\sqrt{g}g^{ab}\frac{\partial}{\partial\varphi_b} \sqrt{\frac{n_1}{n_1+n_2}}{\bf m_1}\\O\end{array}\right)\\
&&+\,\,
\frac{n_1+n_2}{n_2}\frac{1}{\sqrt{g'}}\sum_{a',b'}\frac{\partial}{\partial\varphi_{a'}}\left(\begin{array}{c}O\\\sqrt{g'}g^{a'b'}\frac{\partial}{\partial\varphi_{b'}} \sqrt{\frac{n_2}{n_1+n_2}}{\bf m_2}\end{array}\right)\\
&=&-(n_1+n_2)\left(\begin{array}{c}\sqrt{\frac{n_1}{n_1+n_2}}{\bf m_1}\\O\end{array}\right)-(n_1+n_2)\left(\begin{array}{c}O\\\sqrt{\frac{n_2}{n_1+n_2}}{\bf m_2}\end{array}\right)\\
&=&-(n_1+n_2){\bf \hat{m}}.
\end{eqnarray*}
Thus $\sqrt{\frac{n_1}{n_1+n_2}}\Sigma_1^{n_1}\times\sqrt{\frac{n_2}{n_1+n_2}}\Sigma_2^{n_2}$ is minimal.
\end{proof}

{\bf Remark.} Even if $\Sigma_1^{n_1}\subset\mathbb S^{n_1+1}$ and $\Sigma_2^{n_2}\subset\mathbb S^{n_2+1}$ are hypersurfaces, \\ $\sqrt{\frac{n_1}{n_1+n_2}}\Sigma_1^{n_1}\times\sqrt{\frac{n_2}{n_1+n_2}}\Sigma_2^{n_2}$ has codimension 3 in $\mathbb S^{n_1+n_2+3}$. But if $\Sigma_1^{n_1}=\mathbb S^{n_1}$ one can say that $\Sigma_1^{n_1}$ is trivially minimal in $\mathbb S^{n_1}$ and then $\sqrt{\frac{n_1}{n_1+n_2}}\mathbb S^{n_1}\times\sqrt{\frac{n_2}{n_1+n_2}}\Sigma_2^{n_2}$ is minimal with codimension 2 in $\mathbb S^{n_1+n_2+2}$. Furthermore, $\sqrt{\frac{n_1}{n_1+n_2}}\mathbb S^{n_1}\times\sqrt{\frac{n_2}{n_1+n_2}}\mathbb S^{n_2}$ is minimal with codimension 1 in $\mathbb S^{n_1+n_2+1}$.

\section{helicoidal}

Just as the Clifford torus is helicoidal in $\mathbb S^3$, so is the helicoid in $\mathbb R^3$. For a more general setting we introduce the following definition.\\

\noindent{\bf Definition.} Let $M$ be a complete Riemannian manifold and $\Sigma$ an embedded hypersurface of $M$. Assume that $\Sigma$ divides $M$ into two domains $D_1$ and $D_2$. Suppose that at any point $p$ of $\Sigma$ there is an isometry $\varphi$ of $M$ such that
$$\varphi(p)=p, \,\,\,\,\varphi(\Sigma)=\Sigma, \,\,\,\,\varphi(D_1)=D_2,\,\,\,\, \varphi(D_2)=D_1.$$
Then we say that $\Sigma$ is {\it helicoidal} in $M$.

\begin{proposition}\label{clifford}
The generalized Clifford torus $\Sigma^{2p}=\mathbb S^p({1/\sqrt{2}})\times\mathbb S^p({1/\sqrt{2}})$ is helicoidal in $\mathbb S^{2p+1}$.
\end{proposition}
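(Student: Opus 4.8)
The plan is to exhibit, for each point of $\Sigma^{2p}$, an explicit isometry of $\mathbb{S}^{2p+1}$ realizing the four required conditions. Write $\mathbb{R}^{2p+2}=\mathbb{R}^{p+1}\times\mathbb{R}^{p+1}$ and denote its points by $(x,y)$ with $x,y\in\mathbb{R}^{p+1}$, so that $\mathbb{S}^{2p+1}=\{(x,y):|x|^2+|y|^2=1\}$ and
$$\Sigma^{2p}=\{(x,y):|x|=|y|=1/\sqrt2\}.$$
The hypersurface $\Sigma^{2p}$ separates the sphere into the two domains $D_1=\{|x|>|y|\}$ and $D_2=\{|x|<|y|\}$, each intersected with $\mathbb{S}^{2p+1}$.

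First I would introduce two families of isometries of $\mathbb{S}^{2p+1}$, both arising as restrictions of orthogonal maps of $\mathbb{R}^{2p+2}$: the swap $S(x,y)=(y,x)$, and the block maps $(A,B)\cdot(x,y)=(Ax,By)$ for $A,B\in O(p+1)$. The block maps preserve $|x|$ and $|y|$ individually, hence fix $\Sigma^{2p}$, $D_1$, and $D_2$; the swap $S$ fixes $\Sigma^{2p}$ but interchanges $D_1$ and $D_2$. Since neither alone fixes a prescribed point of $\Sigma^{2p}$, the idea is to correct $S$ by a suitable block map.

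Given $q=(x_0,y_0)\in\Sigma^{2p}$, the key step is to choose $A,B\in O(p+1)$ with $Ay_0=x_0$ and $Bx_0=y_0$; such $A,B$ exist because $|x_0|=|y_0|=1/\sqrt2$ and any two nonzero vectors of equal norm in $\mathbb{R}^{p+1}$ are related by an orthogonal transformation. Define $\varphi=(A,B)\circ S$, so that $\varphi(x,y)=(Ay,Bx)$. Then $\varphi$ is an isometry of $\mathbb{S}^{2p+1}$, and $\varphi(q)=(Ay_0,Bx_0)=(x_0,y_0)=q$. Because $|Ay|=|y|$ and $|Bx|=|x|$, the map $\varphi$ preserves $\Sigma^{2p}$, while a point with $|x|>|y|$ is sent to a point whose first and second factors have norms $|y|<|x|$; hence $\varphi(D_1)=D_2$ and $\varphi(D_2)=D_1$. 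This verifies all four conditions of the definition.

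The construction is essentially forced, so I do not expect a serious obstacle; the only point requiring care is to recognize that the single swap $S$ cannot fix an arbitrary point of $\Sigma^{2p}$ and must be composed with a norm-preserving block rotation, and then to check that the composition still \emph{interchanges} the two domains rather than restoring them. This holds precisely because the block map acts on the two $\mathbb{R}^{p+1}$ factors independently and preserves each norm, so that the domain-swapping inherited from $S$ and the fixing of $q$ occur simultaneously.
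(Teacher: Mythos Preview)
Your proof is correct and follows essentially the same idea as the paper: use the coordinate swap $S(x,y)=(y,x)$, which already preserves $\Sigma^{2p}$ and interchanges $D_1,D_2$, and correct it by block-orthogonal maps to fix the prescribed point. The only cosmetic difference is that the paper phrases the correction as a conjugation $\eta^{-1}\circ S\circ\eta$ (moving $q$ to a point on the diagonal fixed set of $S$ via a block map $\eta$), whereas you compose $S$ directly with a block map; unwinding the conjugation shows the two constructions produce maps of exactly the same form $(x,y)\mapsto(Ay,Bx)$.
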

\begin{proof}
Let $\xi$ be the reflection of $\mathbb R^{2p+2}$ defined by
$$\xi(x_1,\ldots,x_{2p+2})=(x_{p+2},x_{p+3},\ldots,x_{2p+2},x_1,x_2,\ldots,x_{p+1}).$$
If $D_1,D_2$ are the domains of $\mathbb S^{2p+1}$ divided by $\Sigma^{2p}$, then
$$\xi(\Sigma^{2p})=\Sigma^{2p},\,\,\,\xi(D_1)=D_2,\,\,\,\xi(D_2)=D_1$$
and $\xi(p)=p$ if and only if
$$p=(x_1,\ldots,x_{p+1},x_1,\ldots,x_{p+1}).$$
For any $q\in\Sigma^{2p}$, there exists an isometry $\eta$ of $\mathbb S^{2p+1}$ mapping $q$ to $p$ such that
$$\eta(\Sigma^{2p})=\Sigma^{2p},\,\,\,\,\eta(D_1)=D_1,\,\,\,\,\eta(D_2)=D_2.$$
Hence
$$\eta^{-1}\circ\xi\circ\eta(q)=q,\,\,\,\,\,\eta^{-1}\circ\xi\circ\eta(\Sigma^{2p})=\Sigma^{2p},$$
and $$\eta^{-1}\circ\xi\circ\eta(D_1)=D_2,\,\,\,\,\,\eta^{-1}\circ\xi\circ\eta(D_2)=D_1.$$
So $\eta^{-1}\circ\xi\circ\eta$ is the desired isometry.
\end{proof}

\begin{theorem}\label{helicoidalthm}
Every helicoidal hypersurface $\Sigma$ of a Riemannian manifold $M^n$ is minimal in $M$ wherever $\Sigma$ is twice differentiable.
\end{theorem}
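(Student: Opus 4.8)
The plan is to argue pointwise, exploiting the symmetry $\varphi$ supplied by the helicoidal hypothesis together with the fact that the mean curvature vector is natural under isometries that preserve the submanifold. Fix a point $p\in\Sigma$ where $\Sigma$ is twice differentiable, so that the vector-valued second fundamental form, and hence the mean curvature vector $\vec H_p$, is defined at $p$. Let $\nu$ be the unit normal to $\Sigma$ at $p$ pointing into $D_1$, and let $\varphi$ be the isometry of $M$ with $\varphi(p)=p$, $\varphi(\Sigma)=\Sigma$, $\varphi(D_1)=D_2$, $\varphi(D_2)=D_1$ guaranteed by the definition.

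First I would record two consequences of the hypotheses for the differential $d\varphi_p$, which is a linear isometry of $T_pM$. Since $\varphi$ fixes $p$ and maps $\Sigma$ to $\Sigma$, the map $d\varphi_p$ preserves $T_p\Sigma$, hence also its orthogonal complement, the normal line $\mathbb R\nu$; thus $d\varphi_p(\nu)=\pm\nu$. To pin down the sign I would track the side: for small $t>0$ the point $\exp_p(t\nu)$ lies in $D_1$, so $\varphi(\exp_p(t\nu))=\exp_p(t\,d\varphi_p(\nu))$ lies in $\varphi(D_1)=D_2$, which forces $d\varphi_p(\nu)$ to point into $D_2$, that is $d\varphi_p(\nu)=-\nu$.

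Next I would invoke the invariance of the mean curvature vector under isometries: because $\varphi$ is an isometry of $M$ carrying $\Sigma$ onto $\Sigma$, it carries the mean curvature vector at $p$ to the mean curvature vector at $\varphi(p)$, so $d\varphi_p(\vec H_p)=\vec H_{\varphi(p)}=\vec H_p$, the last equality since $\varphi(p)=p$. On the other hand $\vec H_p$ is a normal vector, $\vec H_p=H\nu$ for some scalar $H$, whence $d\varphi_p(\vec H_p)=H\,d\varphi_p(\nu)=-H\nu=-\vec H_p$. Comparing the two expressions gives $\vec H_p=-\vec H_p$, so $\vec H_p=0$. As $p$ was an arbitrary point of twice differentiability, $\Sigma$ is minimal wherever it is twice differentiable.

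The argument is essentially self-contained; the only step that deserves care is the invariance of the mean curvature vector under $\varphi$, which rests on the fact that an isometry commutes with the Levi-Civita connection and therefore sends the second fundamental form of $\Sigma$ at $p$ to that of $\Sigma$ at $\varphi(p)$. I expect this to be the main point to justify cleanly, together with the sign determination $d\varphi_p(\nu)=-\nu$: this is precisely where the interchange $\varphi(D_1)=D_2$ enters decisively, since without the swapping of the two sides one would only obtain $d\varphi_p(\nu)=\nu$ and no conclusion.
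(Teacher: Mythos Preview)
Your proof is correct and follows essentially the same approach as the paper: both argue that $\varphi_*(\vec H_p)=\vec H_p$ by naturality of the mean curvature vector under an isometry preserving $\Sigma$ and fixing $p$, and that $\varphi_*(\vec H_p)=-\vec H_p$ because $\varphi$ swaps the two sides, forcing $\vec H_p=0$. Your version is slightly more explicit in first establishing $d\varphi_p(\nu)=-\nu$ via the exponential map, whereas the paper reasons directly about the direction of $\vec H$, but the content is the same.
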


\begin{proof}
Let $\vec{H}$ be the mean curvature vector of $\Sigma$ at a point $p\in\Sigma$, that is,
$$\vec{H}=\sum_{i=1}^{n-1} (\bar{\nabla}_{e_i}e_i)^\perp,$$
 where $\bar{\nabla}$ is the Riemannian connection on $M$ and $e_1,\ldots,e_{n-1}$ are orthonormal vectors of $\Sigma$ at $p$. Since $\varphi(\Sigma)=\Sigma$ and $p$ is a fixed point of $\varphi$, one sees that $\varphi_*(e_1),\ldots,\varphi_*(e_{n-1})$ are also orthonormal on $\Sigma$ at $p$. Hence
 \begin{equation}\label{H}
 \varphi_*(\vec{H})=\sum_{i=1}^{n-1}(\bar{\nabla}_{\varphi_*(e_i)}\varphi_*(e_i))^\perp=\sum_{i=1}^{n-1} (\bar{\nabla}_{e_i}e_i)^\perp=\vec{H}.
 \end{equation}
 On the other hand, the condition $\varphi(D_1)=D_2$ implies that if $\vec{H}$ points into $D_1$ then $\varphi_*(\vec{H})$ points into $D_2$. Likewise, if $\vec{H}$ points into $D_2$, then $\varphi_*(\vec{H})$ should point into $D_1$. Therefore $\varphi_*(\vec{H})=-\vec{H}$, which together with \eqref{H} implies $\vec{H}=0$ at $p$. As $p$ is arbitrarily chosen, one concludes that $\Sigma$ is minimal.
\end{proof}

Incidentally,
$\mathbb S^1\left({1/\sqrt{2}}\right)\times\mathbb S^1\left({1/\sqrt{2}}\right)$ is congruent in $\mathbb S^3$ to $\mathbb S^3\cap\{(x_1,x_2,x_3,x_4)\in\mathbb R^4:{\rm det}\left(\begin{array}{c}x_1\\x_2
\end{array}\begin{array}{c}
x_3\\x_4\end{array}\right)=0\}$. Also $\mathbb S^2\left({1/\sqrt{2}}\right)\times\,\mathbb S^2\left({1/\sqrt{2}}\right)$ is congruent in $\mathbb S^5$ to $\mathbb S^5\,\cap\,\{(x_1,\ldots,x_6)\in\mathbb R^6:{\rm det}\left(\begin{array}{c}0\\-x_1\\-x_2\\-x_3
\end{array}\begin{array}{c}
x_1\\0\\-x_4\\-x_5\end{array}\begin{array}{c}
x_2\\x_4\\0\\-x_6\end{array}\begin{array}{c}
x_3\\x_5\\x_6\\0\end{array}\right)=0\}.$
When is the zero determinant set minimal? With regard to this question, the following two theorems have been recently proved.

\begin{theorem}\label{T} {\rm (Tkachev, \cite{T})}
$\Sigma=\{(x_{11},x_{12},\ldots,x_{nn})\in\mathbb R^{n^2}:(x_{ij})$ is an $n\times n$ real matrix with zero determinant.$\}$ is a minimal hypercone in $\mathbb R^{n^2}$.
\end{theorem}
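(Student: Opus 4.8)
The plan is to show that the zero-determinant hypercone $\Sigma$ is \emph{helicoidal} in $\mathbb{R}^{n^2}$ and then invoke Theorem \ref{helicoidalthm}. I would equip $\mathbb{R}^{n^2}$ with the Frobenius inner product $\langle X,Y\rangle=\mathrm{tr}(X^{T}Y)$, so that for any $U,V\in O(n)$ the linear map $X\mapsto UXV$ is an isometry; since $\det(UXV)=\det(U)\det(V)\det(X)$, such a map preserves $\Sigma$ and multiplies the sign of the determinant by $\det(U)\det(V)$. The two domains cut out by $\Sigma$ are $D_1=\{\det>0\}$ and $D_2=\{\det<0\}$, which are connected (they are the two components of $GL_n(\mathbb{R})$), so $\Sigma$ indeed separates $\mathbb{R}^{n^2}$ into two pieces.

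First I would fix a smooth point $q$ of $\Sigma$, i.e.\ a matrix of rank exactly $n-1$ (the lower-rank locus is the singular set, where minimality is not asserted). By the singular value decomposition, write $q=U_0 D V_0^{T}$ with $U_0,V_0\in O(n)$ and $D=\mathrm{diag}(\sigma_1,\ldots,\sigma_{n-1},0)$, placing the zero singular value in the last slot. Then $\eta(X):=U_0^{T}XV_0$ is an isometry sending $q$ to $D$.

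Next I would exhibit a single universal reflection that works at every such normalized form. Let $R=\mathrm{diag}(1,\ldots,1,-1)$ and set $\xi(X):=RX$. This is a Frobenius isometry; since $\det(R)=-1$ it preserves $\Sigma$ and interchanges $D_1$ and $D_2$; and because left multiplication by $R$ only reverses the sign of the last row of $X$, which for $D$ is identically zero, it fixes $D$. Conjugating, put $\Phi:=\eta^{-1}\circ\xi\circ\eta$. Then $\Phi(q)=q$ and $\Phi(\Sigma)=\Sigma$, and---since whether an isometry swaps or preserves the ordered pair $(D_1,D_2)$ is a homomorphism to $\{\pm1\}$, so conjugation leaves it unchanged and $\xi$ swaps---$\Phi$ swaps $D_1$ and $D_2$ regardless of what $\eta$ does. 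Thus $\Phi$ is exactly the isometry required by the definition of helicoidal at $q$; as $q$ was an arbitrary smooth point, $\Sigma$ is helicoidal and hence minimal by Theorem \ref{helicoidalthm}, proving $\Sigma$ is a minimal hypercone.

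The heart of the argument---and the step I expect to be the real obstacle---is producing one isometry that \emph{simultaneously} fixes the given point and swaps the two sides of $\Sigma$. These two demands pull in opposite directions, and they can be reconciled precisely because $\det q=0$: the vanishing of a singular value frees up a row (after the SVD normalization) on which the orientation-reversing reflection $R$ acts trivially, so $R$ can flip the sign of the determinant---exchanging $D_1$ and $D_2$---while leaving the normalized point untouched. The remaining points (that $\eta$ and $\xi$ are isometries, and the bookkeeping of the domain swap through the conjugation) are routine.
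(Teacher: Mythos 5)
Your proposal is correct and is essentially the paper's own argument: both exhibit an orientation-reversing orthogonal left multiplication that fixes the given singular matrix (your $\Phi$ collapses to $X\mapsto (U_0RU_0^{T})X$, and $U_0RU_0^{T}$ is exactly the paper's reflection across a hyperplane containing the column space of $q$) and then invoke Theorem \ref{helicoidalthm}. The only cosmetic differences are that you package the reflection via the singular value decomposition rather than choosing it directly, and you omit the one-line remark that $\Sigma$ is a cone because $\det$ is homogeneous.
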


\begin{theorem}\label{HLT} {\rm (Hoppe-Linardopoulos-Turgut, \cite{HLT})}
$\Sigma=\{(x_{11},x_{12},\ldots,x_{2n\,2n})\in\mathbb R^{4n^2}:(x_{ij})$ is  a $2n\times2n$ skew-symmetric matrix with zero determinant.$\}$ is congruent to a  minimal hypercone in $\mathbb R^{2n^2-n}$.
\end{theorem}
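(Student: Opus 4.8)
The plan is to establish minimality exactly as one would for Theorem \ref{T}: show that the hypersurface is \emph{helicoidal} and then invoke Theorem \ref{helicoidalthm}. First I would reduce the defining equation. For a $2n\times 2n$ skew-symmetric matrix $A$ one has ${\rm det}\,A=({\rm Pf}\,A)^2$, so the zero-determinant set coincides with the Pfaffian hypersurface $\Sigma=\{A:{\rm Pf}\,A=0\}$. The strictly upper-triangular entries $a_{ij}$ $(i<j)$ are free coordinates, so the space of skew-symmetric matrices, with the Frobenius inner product $\langle A,B\rangle={\rm tr}(A^{T}B)$, is a Euclidean space of dimension $\binom{2n}{2}=2n^{2}-n$; this linear identification, up to a constant rescaling of the metric, is the congruence asserted in the statement. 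Since ${\rm Pf}$ is homogeneous of degree $n$, $\Sigma$ is a hypercone, and its two sides are $D_{1}=\{{\rm Pf}>0\}$ and $D_{2}=\{{\rm Pf}<0\}$.

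Next I would introduce the relevant isometries. The group $O(2n)$ acts by conjugation $A\mapsto Q^{T}AQ$. This action is linear, preserves skew-symmetry, and preserves the Frobenius inner product, hence is a Euclidean isometry of $\mathbb{R}^{2n^{2}-n}$. Moreover it transforms the Pfaffian by the rule ${\rm Pf}(Q^{T}AQ)=({\rm det}\,Q)\,{\rm Pf}(A)$. Consequently, for any $Q$ with ${\rm det}\,Q=-1$ the conjugation $\varphi(B)=Q^{T}BQ$ maps $\Sigma$ onto itself while reversing the sign of ${\rm Pf}$, so it interchanges $D_{1}$ and $D_{2}$. Thus every such $\varphi$ already satisfies three of the four helicoidal requirements; what remains, at a given point, is to arrange $\varphi(p)=p$.

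The heart of the argument is the construction of a determinant$-1$ stabilizer at each smooth point, and I expect this to be the main obstacle. Fix a point $p=A\in\Sigma$ where $\Sigma$ is twice differentiable (equivalently $\nabla{\rm Pf}(A)\neq 0$). Because ${\rm Pf}\,A=0$ the matrix $A$ is singular, so $K=\ker A$ is nontrivial; for skew-symmetric $A$ one in fact has $\dim K\geq 2$. Since $A$ vanishes on $K$ and maps $K^{\perp}={\rm range}\,A$ into itself, any orthogonal map that preserves the splitting $K\oplus K^{\perp}$ and acts as the identity on $K^{\perp}$ commutes with $A$. I would therefore take $Q$ to be a single reflection inside $K$ together with the identity on $K^{\perp}$; then $Q$ is orthogonal, $Q^{T}AQ=A$ (so $\varphi(A)=A$), and ${\rm det}\,Q=-1$. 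This is precisely the step where the hypothesis ${\rm Pf}\,A=0$ is indispensable: the vanishing Pfaffian produces the kernel that houses the orientation-reversing reflection, whereas at a nonsingular skew matrix no determinant$-1$ orthogonal map can fix it under conjugation. The verification that $Q$ commutes with $A$ and that the Pfaffian sign law holds is routine linear algebra, but the idea of using reflections in $\ker A$ is the key point.

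Putting these together, at every twice-differentiable point $p\in\Sigma$ the map $\varphi$ built above fixes $p$, preserves $\Sigma$, and exchanges $D_{1}$ and $D_{2}$, so $\Sigma$ is helicoidal there; Theorem \ref{helicoidalthm} then gives $\vec{H}=0$ at $p$. Since the singular locus $\{\nabla{\rm Pf}=0\}$ has codimension at least two and does not affect the minimality of the cone, I would conclude that $\Sigma$ is a minimal hypercone, as claimed. (One may equally pass to the link $\Sigma\cap\mathbb{S}^{2n^{2}-n-1}$ and argue there, since the proof of Theorem \ref{helicoidalthm} only uses that $\varphi$ reverses the two local sides of $\Sigma$ at $p$, which is guaranteed by the sign flip of ${\rm Pf}$.)
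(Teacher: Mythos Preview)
Your proposal is correct and follows the same overall strategy as the paper: identify $\Sigma$ with the Pfaffian hypersurface in the Euclidean space $N\cong\mathbb{R}^{2n^2-n}$ of skew-symmetric matrices, use the $O(2n)$-action $X\mapsto Q^TXQ$ as the supply of isometries, note that ${\rm Pf}(Q^TXQ)=(\det Q)\,{\rm Pf}(X)$ so that $\det Q=-1$ swaps the two sides, and then produce at each $X\in\Sigma$ a determinant$-1$ orthogonal $Q$ with $Q^TXQ=X$; helicoidality and Theorem~\ref{helicoidalthm} finish the argument.

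The only substantive difference is in how that determinant$-1$ stabilizer is found. The paper first block-diagonalizes $X$ by an orthogonal $Q_0$ to the canonical form $\Lambda$ with a zero $2\times 2$ block at the end, uses the explicit swap matrix $J=\mathrm{diag}(I_{2n-2},K)$ with $K=\begin{pmatrix}0&1\\1&0\end{pmatrix}$ to get $J\Lambda J=\Lambda$, and then conjugates back to $B=Q_0JQ_0^{T}$. Your construction is more direct: pick any unit vector $v\in\ker A$ and take $Q=I-2vv^{T}$. Since $Av=0$ and $v^{T}A=(A^{T}v)^{T}=0$, one immediately gets $QAQ=A$, and $\det Q=-1$. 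This bypasses the spectral normal form and is a bit cleaner; the paper's $J$ is, in effect, exactly such a reflection written in the eigenbasis. Both approaches implicitly use that a singular real skew-symmetric matrix has $\dim\ker A\geq 2$ (rank is even), which guarantees the zero block the paper needs and makes your reflection available. One small quibble: your parenthetical ``equivalently $\nabla{\rm Pf}(A)\neq 0$'' for twice-differentiable points is not needed for the construction itself (your $Q$ exists at every $A\in\Sigma$), and the paper simply states that minimality holds wherever $\Sigma$ is twice differentiable, without analyzing the singular locus further.
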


They obtained these theorems from the harmonicity of $x_{ij}$ on $\Sigma$. Here we will give a new proof by showing that $\Sigma$ is helicoidal.\\

\noindent {\it Proof of Theorem \ref{T}.} Let $M_n$ be the set of all real $n\times n$ matrices. One can identify $M_n$ with $\mathbb R^{n^2}$. Define $\Sigma=\{X\in M_n:{\det}X=0\}$. Then $\Sigma$ is an $(n^2-1)$-dimensional algebraic variety in $\mathbb R^{n^2}$. $\Sigma$ divides $\mathbb R^{n^2}$ into two domains $D_+$ and $D_-$ wih
$$D_+=\{X\in M_n:{\rm det}X>0\},\,\,\,\,D_-=\{X\in M_n:{\rm det}X<0\}.$$
Let's introduce an inner product $\langle\,\,,\,\rangle$ in $M_n$ by
$$\langle X,Y\rangle={\rm tr}(X^TY),\,\,\,\,X,Y\in M_n.$$
Given $A\in O(n)$, define $\varphi_A:M_n\rightarrow M_n$ by $\varphi_A(X)=AX.$ Then $\varphi_A$ is an isometry on $M_n$ because
$$\langle\varphi_A(X),\varphi_A(Y)\rangle=\langle AX,AY\rangle={\rm tr}(X^TA^TAY)={\rm tr}(X^TY)=\langle X,Y\rangle.$$
Clearly
$$\varphi_A(\Sigma)=\Sigma.$$
Moreover, if $A\in SO(n)$, then
$$\varphi_A(D_+)=D_+\,\,{\rm and}\,\,\varphi_A(D_-)=D_-,$$
and if
$A\in O(n)\setminus SO(n),$ then
$$\varphi_A(D_+)=D_-\,\,{\rm and}\,\,\varphi_A(D_-)=D_+.$$
Choose any $X\in\Sigma$. Then the column vectors of $X$ are linearly dependent. Let $P$ be an $(n-1)$-dimensional hyperplane of $\mathbb R^n$ containing all the column vectors of $X$ and let $v\in\mathbb R^n$ be a nonzero normal vector of $P$. Then there exists $A\in O(n)\setminus SO(n)$ such that $P$ is an eigenspace of $A$ with eigenvalue 1 and $v$ an eigenvector of $A$ with eigenvalue $-1$. Hence
$$\varphi_A(X)=X\,\,\,\,{\rm and}\,\,\,\,\varphi_A(D_+)=D_-,\,\,\,\,\varphi_A(D_-)=D_+.$$
Therefore $\Sigma$ is helicoidal in $\mathbb R^{n^2}$ and so by Theorem \ref{helicoidalthm} it is minimal in $\mathbb R^{n^2}$. $\Sigma$ is a cone since ${\rm det}X$ is a homogeneous polynomial.\hspace{5.5cm} $\square$\\

It is known that the determinant of a $2n\times2n$ skew-symmetric matrix $A$ can be written as the square of the Pfaffian of $A$. The Pfaffian $pf(A)$ of $A=(a_{ij})$ is defined as follows. Let $\omega$ be a 2-vector $$\omega=\sum_{i<j}a_{ij}e_i\wedge e_j,$$
 where $\{e_1,\ldots,e_{2n}\}$ is the standard basis of $\mathbb R^{2n}$. Then $pf(A)$ is defined by
 $$\frac{1}{n!}\,\omega^n=pf(A)\,e_1\wedge\cdots\wedge e_{2n}.$$
One computes
 $$pf(A)=\frac{1}{2^nn!}\sum_{\sigma\in S_{2n}}{\rm sgn}(\sigma)\prod_{n=1}^na_{\sigma(2i-1)\sigma(2i)}.$$
Moreover,
\begin{equation}\label{pf}
pf(B^TAB)={\rm det}(B)\,pf(A)
\end{equation}
for any skew-symmetric matrix $A$ and any $2n\times2n$ matrix $B$. \\

{\it Proof of Theorem \ref{HLT}}. Define
$$N=\{X\in M_{2n}:X=\left(\begin{array}{c}0\\-x_1\\-x_2\\\cdot\\\cdot\\-x_{2n-1}
\end{array}\begin{array}{c}
x_1\\0\\-x_{2n}\\\cdot\\\cdot\\-x_{4n-3}\end{array}\begin{array}{c}
x_2\\x_{2n}\\0\\\cdot\\\cdot\\-x_{6n-6}\end{array}\begin{array}{c}
\cdot\\\cdot\\\cdot\\\cdot\\\cdot\\\cdot\end{array}\begin{array}{c}
\cdot\\\cdot\\\cdot\\\cdot\\0\\-x_{2n^2-n}\end{array}\begin{array}{c}
x_{2n-1}\\x_{4n-3}\\x_{6n-6}\\\cdot\\x_{2n^2-n}\\0\end{array}\right)\}$$
and
\begin{center}
$\Sigma=\{X$ is a $2n\times2n$ skew-symmetric marix with ${\rm det}X=0\}.$
\end{center}
Then $\Sigma$ is a hypersurface in the $(2n^2-n)$-dimensional subspace $N$ of $\mathbb R^{4n^2}$. Let
\begin{center}
 $D_+=\{X$ is a $2n\times2n$ skew-symmetric matrix with $pf(X)>0\}$,
 \end{center}
  \begin{center}
  $D_-=\{X$ is a $2n\times2n$ skew-symmetric matrix with $pf(X)<0\}.$
   \end{center}
For any $A\in O(2n)$ define $\psi_A:M_{2n}\rightarrow M_{2n}$ by $$\psi_A(X)=A^TXA.$$
One sees that $\psi_A(X)$ is skew-symmetric if $X$ is. Hence
$$\psi_A:N\rightarrow N\,\,\,\,{\rm and}\,\,\,\,\psi_A(\Sigma)=\Sigma.$$
$\psi_A$ is an isomety since
\begin{eqnarray*}
 \langle\psi_A(X),\psi_A(Y)\rangle&=&\langle A^TXA,A^TYA\rangle={\rm tr}(A^TX^TAA^TYA)\\&=&{\rm tr}(A^TX^TYA)={\rm tr}(AA^TX^TY)=\langle X,Y\rangle.
 \end{eqnarray*}
 Every skew-symmetric matrix can be reduced to a block diagonal form by a special orthogonal matrix. In particular, every $2n\times2n$ skew symmetric matrix $X$ with zero determinant can be transformed by an orthogonal matrix $Q$ to the form
\begin{equation}\label{lambda}
Q^TXQ=\left(\begin{array}{c}0\\-\lambda_1\\0\\0\\0\\0\\0\\0\end{array}
\begin{array}{c}\lambda_1\\0\\\cdot\\\cdot\\\cdot\\\cdot\\\cdot\\\cdot\end{array}\begin{array}{c}
0\\\cdot\\\cdot\\\cdot\\\cdot\\\cdot\\\cdot\\\cdot\end{array}\begin{array}{c}
0\\\cdot\\\cdot\\\cdot\\\cdot\\\cdot\\\cdot\\\cdot\end{array}\begin{array}{c}
0\\\cdot\\\cdot\\\cdot\\0\\-\lambda_k\\\cdot\\\cdot\end{array}\begin{array}{c}
0\\\cdot\\\cdot\\\cdot\\\lambda_k\\0\\\cdot\\\cdot\end{array}\begin{array}{c}
0\\\cdot\\\cdot\\\cdot\\\cdot\\\cdot\\0\\0\end{array}\begin{array}{c}
0\\\cdot\\\cdot\\\cdot\\\cdot\\\cdot\\0\\0\end{array}\right):=\Lambda,
\end{equation}
where $\lambda_1,\ldots,\lambda_k$ are real.

Define a $2n\times2n$ block matrix
$$J=\left(\begin{array}{c}
I_{2n-2}\\{O}\end{array}\begin{array}{c}
{ O}^T\\K\end{array}\right),$$
where $I_{2n-2}$ is the $(2n-2)\times(2n-2)$ identity matrix, $O$ is the $2\times(2n-2)$ zero matrix and $K=\left(\begin{array}{c}
0\\1\end{array}\begin{array}{c}
1\\0\end{array}\right).$
Then for any $X\in\Sigma$ we have an orthogonal matrix $Q$ such that
$$Q^TXQ=\Lambda\,\,\,\,{\rm and}\,\,\,\,J\Lambda J=\Lambda.$$
Hence
$$JQ^TXQJ=Q^TXQ.$$
Therefore
$$(QJQ^T)X(QJQ^T)=X,\,\,\,\,QJQ^T\neq I,\,\,\,\,{\rm det}(QJQ^T)=-1.$$
$QJQ^T$ is orthogonal because
$$(QJQ^T)(QJQ^T)^T=QJQ^TQJQ^T=QJJQ^T=QQ^T=I.$$
Let $B=QJQ^T$. Then by \eqref{pf}
$$pf(\psi_B(Y))=-pf(Y)$$
for any skew-symmetric matrix $Y$
and hence
$$\psi_B(X)=X,\,\,\,\,\psi_B(\Sigma)=\Sigma,\,\,\,\,\psi_B(D_+)=D_-,\,\,\,\,\psi_B(D_-)=D_+.$$
Therefore $\Sigma$ is helicoidal and thus minimal in $N$ everywhere it is twice differentiable.

Let $\mu:N\rightarrow\mathbb R^{2n^2-n}$ be the map defined by
$$\mu(X)=\frac{1}{\sqrt{2}}(x_1,x_2,\ldots,x_{2n^2-n}),$$
where $$X=\left(\begin{array}{c}0\\-x_1\\-x_2\\\cdot\\\cdot\\-x_{2n-1}
\end{array}\begin{array}{c}
x_1\\0\\-x_{2n}\\\cdot\\\cdot\\-x_{4n-3}\end{array}\begin{array}{c}
x_2\\x_{2n}\\0\\\cdot\\\cdot\\-x_{6n-6}\end{array}\begin{array}{c}
\cdot\\\cdot\\\cdot\\\cdot\\\cdot\\\cdot\end{array}\begin{array}{c}
\cdot\\\cdot\\\cdot\\\cdot\\0\\-x_{2n^2-n}\end{array}\begin{array}{c}
x_{2n-1}\\x_{4n-3}\\x_{6n-6}\\\cdot\\x_{2n^2-n}\\0\end{array}\right).$$
Then $\mu$ is an isometry. Therefore $\mu(\Sigma)$ is  a minimal hypercone in $\mathbb R^{2n^2-n}$.\hspace{1.5cm} $\square$\\

\noindent{\bf Questions.}\\
{\bf 1.} A generalized helicoid is defined in \cite{CH} to be the locus of the minimal cone $O\cone(\mathbb S^n(1/\sqrt{2})\times\mathbb S^n(1/\sqrt{2}))$ when the multi-screw motion in $\mathbb R^{2n+3}$ is applied to the cone. That generalized helicoid is minimal. Instead of $\mathbb S^n$, let's consider its  minimal submanifold $M$. Then the cone $O\cone\left(\frac{1}{\sqrt{2}}M\times\frac{1}{\sqrt{2}}M\right)$ is minimal in $\mathbb R^{2n+2}$. If we apply the multi-screw motion in $\mathbb R^{2n+3}$ to the cone, is its locus minimal?\\
{\bf 2.} In the proof of Theorem \ref{T} the hyperplane $P$ is assumed to contain all the column vectors of the matrix $X$. The minimal hypercone $\Sigma$ of the theorem  may have a singularity other than the origin. Is it true that the rank of $X$ is related with the Hausdorff dimension of the singular set of $\Sigma$?\\


\begin{thebibliography}{11111}
\setlength{\baselineskip}{0.9\baselineskip}
\bibitem[CH]{CH} J. Choe and J. Hoppe, {\it Higher dimensional minimal submanifolds generalizing the
catenoid and helicoid}, Tohoku Math. J. {\bf 65} (2013), 43-55.
\bibitem[HLT]{HLT} J. Hoppe, G. Linardopoulos, O. T. Turgut, {\it New minimal hypersurfaces in $\mathbb R^{(k+1)(2k+1)}$ and $\mathbb S^{2k^2+3k}$},
2017, arXiv:1602.09101.
\bibitem[T]{T} V. G. Tkachev, {\it Minimal cubic cones via Clifford algebras}, Complex Anal. Oper. Theory {\bf 4} (2010), 685-700.
\end{thebibliography}
\end{document}